\newcommand{\N}{\mathbb{N}}
\newcommand{\Q}{\mathbb{Q}}
\newcommand{\R}{\mathbb{R}}
\DeclareMathOperator{\interior}{int}
\newcommand{\Lp}[1]{\ensuremath{\textsf{\upshape L}^{#1}}}
\renewcommand{\d}{\mathrm{d}}
\newcommand{\e}{\mathrm{e}}
\newcommand{\E}{\textsf{\upshape E}}
\renewcommand{\P}{\textsf{\upshape P}}
\newcommand{\Qu}{\textsf{\upshape Q}}
\newcommand{\indicator}[1]{\mathbf{1}_{#1}}
\newcommand{\sigalg}[1]{\mathscr{#1}}
\DeclarePairedDelimiter\zjel{(}{)}
\DeclarePairedDelimiter\abs{|}{|}
\DeclarePairedDelimiterX\br[1]{[}{]}{#1}
\DeclarePairedDelimiter\smallset{\{}{\}}
\let\event=\smallset
\DeclarePairedDelimiterX\set[2]\{\}{#1\::\:#2}
\newcommand\given{\nonscript\:\delimsize\vert\nonscript\:\mathopen{}}
\begin{document}

\section{Introduction and related literature}
Let $(\Omega, \sigalg F, \P)$ denote a probability space equipped with
a discrete-time filtration $(\sigalg F_t)_{t \in \N_0}$, where
$\sigalg F_t \subset \sigalg F$. Moreover, let $S = (S_t)_{t \in
  \N_0}$ denote a $d$-dimensional $\P$--local martingale, where $d \in
\N$. Then there exists a probability measure $\Qu$, equivalent to
$\P$, such that $S$ is a $\Qu$--martingale. 
This follows from more general results that relate appropriate
no-arbitrage conditions to the existence of an equivalent martingale
measure;  see  \cite{Dalang:Morton:Willinger} and
\cite{Schachermayer:1992} for the finite-horizon case and  
\cite{Schachermayer:1994} for the infinite-horizon case. These results
are sometimes baptized fundamental theorems of asset pricing. 

More recently, \cite{Kabanov:2008} and \cite{Prokaj:Rasonyi:2010} have
provided a direct proof for the existence of such a measure $\Qu$; see
also Section~2 in \cite{Kabanov:Safarian}. The proof in
\cite{Kabanov:2008}  relies on deep functional analytic results, e.g.,
the Krein-\v Smulian theorem. The proof in \cite{Prokaj:Rasonyi:2010}
avoids functional analysis but requires non-trivial measurable
selection techniques.  

As this note demonstrates, in one dimension, an important but special
case, the Radon-Nikodym derivative $Z_\infty = \sfrac{\d \Qu}{\d \P}$
can be explicitly constructed.  Moreover, in higher dimensions, the
measurable selection results can be simplified. This is done here by
appropriately modifying an ingenious idea of \cite{Rogers:1994}. 

More precisely, the following theorem will be proved in Section~\ref{S:proof}.

\begin{theorem}\label{T:1}
  For all $\varepsilon > 0$, there exists a uniformly integrable
  $\P$--martingale $Z= (Z_t)_{t \in \N_0}$, bounded from above by
  $1+\varepsilon$, with $Z_\infty = \lim_{t \uparrow \infty} Z_t > 0$,
  such that $ZS$ is a $\P$--martingale and such that $\E_\P[Z_t
  |S_t|^p] < \infty$ for all $t \in \N_0$ and $p \in \N$.  
\end{theorem}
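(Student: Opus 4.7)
My plan is to build $Z$ as a telescoping product $Z_t = \prod_{s=1}^{t} \eta_s$ of strictly positive, $\sigalg F_s$-measurable one-step densities, and to fix in advance a positive sequence $(\delta_s)_{s \ge 1}$ with $\prod_{s \ge 1}(1 + \delta_s) \le 1 + \varepsilon$. The theorem is then reduced to the following single-step task: for each $s \ge 1$, construct an $\sigalg F_s$-measurable $\eta_s$ satisfying
(i)~$0 < \eta_s \le 1+\delta_s$;
(ii)~$\E_\P[\eta_s \mid \sigalg F_{s-1}] = 1$;
(iii)~$\E_\P[\eta_s \, \Delta S_s \mid \sigalg F_{s-1}] = 0$; and
(iv)~$\E_\P[\eta_s \, (1 + |\Delta S_s|)^p \mid \sigalg F_{s-1}] < \infty$ for every $p \in \N$.
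Here (ii) makes $Z$ a $\P$-martingale, (iii) makes $ZS$ a $\P$-martingale, (i) delivers the uniform bound $Z_t \le 1 + \varepsilon$ and uniform integrability of $Z$, and (iv) propagates polynomial moments of $S_t$ under the tilted measure by conditioning in $t$.

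For the one-step construction I exploit that, in discrete time, $S$ being a $\P$-local martingale is equivalent to $\E_\P[|\Delta S_s| \mid \sigalg F_{s-1}] < \infty$ and $\E_\P[\Delta S_s \mid \sigalg F_{s-1}] = 0$ $\P$-a.s. Writing $X := \Delta S_s$ and letting $\mu$ denote its $\sigalg F_{s-1}$-conditional law, I fix the rapidly decaying weight $\psi(x) = \e^{-|x|}$ (so that $\psi(x)(1+|x|)^p$ is uniformly bounded in $x$ for every $p$) and pick an $\sigalg F_{s-1}$-measurable truncation $N_s$ large enough that $\P[|X| > N_s \mid \sigalg F_{s-1}] \le \delta_s/2$. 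Borrowing the spirit of Rogers' argument, I take the piecewise ansatz
\[
\eta_s \;=\; \alpha_s\, \indicator{|X| \le N_s} \;+\; \beta_s\, \psi(X)\, \indicator{X > N_s} \;+\; \gamma_s\, \psi(X)\, \indicator{X < -N_s},
\]
with $\sigalg F_{s-1}$-measurable positive coefficients $\alpha_s, \beta_s, \gamma_s$. This gives three parameters against the two linear constraints (ii) and (iii); the remaining degree of freedom is used to keep each coefficient in $(0, 1 + \delta_s]$. Solvability of the $2\times 3$ system in the positive orthant rests on the conditional mean-zero property of $X$ together with the fact that both $X \indicator{|X| \le N_s}$ and $X \psi(X)$ are bounded, while condition (iv) is immediate from the choice of $\psi$ and the truncation.

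The principal obstacle is ensuring $Z_\infty > 0$ $\P$-a.s., since on the tail event $\{|X| > N_s\}$ the factor $\eta_s$ can be nearly zero. My remedy is to enlarge $N_s$ further, so that $\sum_{s \ge 1} \P[|\Delta S_s| > N_s \mid \sigalg F_{s-1}] < \infty$ $\P$-a.s.; the conditional Borel--Cantelli lemma then implies that only finitely many $\eta_s$ ever leave the ``bulk'' value $\alpha_s$ (which is itself forced to be close to $1$ by the choice of $N_s$), so the product $\prod_s \eta_s$ converges to a strictly positive limit. A secondary subtlety is the joint $\sigalg F_{s-1}$-measurability of $(\alpha_s, \beta_s, \gamma_s, N_s)$: in dimension one this can be arranged through explicit formulas in terms of conditional distribution functions of $X$, while in higher dimensions a (streamlined, as advertised in the introduction) measurable selection step is needed. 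The most delicate part of the argument, I expect, will be to verify that the one-step linear system admits a solution with each coefficient bounded in $(0, 1+\delta_s]$ simultaneously for $\P$-a.e.\ $\omega$, and that such a solution can be chosen measurably in $\omega$.
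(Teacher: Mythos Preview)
Your one-step construction is in the right spirit and close to the paper's Lemma~\ref{L:1a}, but the global assembly has a genuine gap: condition~(iv) is only \emph{conditional} integrability, and it does not deliver $\E_\P[Z_t|S_t|^p]<\infty$. Concretely, using $Z_r\le 1+\varepsilon$ for $r<s$ and $\E_\P\bigl[\prod_{r>s}\eta_r\mid\sigalg F_s\bigr]=1$, one reduces $\E_\P[Z_t|\Delta S_s|^p]$ to $(1+\varepsilon)\,\E_\P[\eta_s|\Delta S_s|^p]$; but with your ansatz $\eta_s(1+|\Delta S_s|)^p$ is only bounded by an $\sigalg F_{s-1}$--measurable quantity of order $(1+N_s)^p$, and the truncation level $N_s$ --- fixed through a conditional tail probability --- has no reason to lie in any $L^q(\P)$. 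The difficulty is already visible at $t=0$: your $Z_0=1$ gives $\E_\P[Z_0|S_0|^p]=\E_\P[|S_0|^p]$, which is not assumed finite for a local martingale. Without honest unconditional moments, the claim that $ZS$ is a $\P$--martingale is also unproved, since integrability of $Z_tS_t$ is part of the definition.

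The paper closes exactly this gap by reversing the order of the one-step construction. For each fixed horizon $n$, Lemma~\ref{L:2a} builds factors $z_n,z_{n-1},\ldots,z_0$ \emph{backward}: at step $t$ it applies the one-step Lemma~\ref{L:1a} with $Y$ replaced by $Y\prod_{i>t}z_i$, upgrading ``$\E_\Qu[\,\cdot\mid\sigalg F_t]<\infty$'' to ``$\E_\Qu[\,\cdot\mid\sigalg F_{t-1}]<\infty$''. At the last step the conditioning $\sigma$-algebra is the trivial $\sigalg F_{-1}$, so one obtains the unconditional bound $\E_{\Qu'}[e^{|S_n|}]<\infty$. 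The final density $Z_\infty$ is then an infinite product of these block densities $Z^{(n)}=\prod_{t=0}^n z_t$, one block per horizon $n$, rather than a single forward telescoping product; positivity of $Z_\infty$ follows from Borel--Cantelli applied to the blocks. Your forward scheme never reaches a trivial $\sigma$-algebra, so the ``conditioning in $t$'' you invoke cannot turn almost-sure finiteness into an honest expectation.
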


The fact that the bound on $Z$ can be chosen arbitrarily close to $1$
seems to be a novel observation.  Considering a standard random walk
$S$ directly yields that there is no hope for a stronger version of
Theorem~\ref{T:1} which would assert that $ZS$ is not only a
$\P$--martingale but also a $\P$--uniformly integrable martingale. 

A similar version of the following corollary is formulated in
\cite{Prokaj:Rasonyi:2010}; it would also be a direct  consequence of
\cite{Kabanov:Stricker:2001}.  To state it, let us introduce the total
variation norm $\| \cdot\|$ for two equivalent probability measures
$\Qu_1, \Qu_2$ as 
\begin{align*}
  \|\Qu_1 - \Qu_2\| = \E_{\Qu_1} \left[\left| \sfrac{\d \Qu_2}{\d \Qu_1} - 1 \right| \right].
\end{align*}

\begin{corollary}\label{C:1}
  For all $\varepsilon > 0$, there exists a probability measure $\Qu$,
  equivalent to $\P$, such that $S$ is a $\Qu$--martingale, $\|\P -
  \Qu\| < \varepsilon$, and   $\E_\Qu[|S_t|^p] < \infty$ for all $t
  \in \N_0$ and $p \in \N$.  
\end{corollary}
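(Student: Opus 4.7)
The plan is to deduce the corollary directly from Theorem~\ref{T:1}. I would apply the theorem with a parameter $\varepsilon'$ strictly smaller than $\varepsilon/2$ (say $\varepsilon' = \varepsilon/3$) to obtain the uniformly integrable $\P$-martingale $Z$ with $Z_\infty \leq 1 + \varepsilon'$ and $Z_\infty > 0$, and then define $\Qu$ by $\sfrac{\d \Qu}{\d \P} = Z_\infty$. Provided $Z_0 = 1$ (either built into Theorem~\ref{T:1} or enforced by a rescaling $Z \mapsto Z/Z_0$, noting $Z_0 = \E_\P[Z_\infty] > 0$), this yields a probability measure equivalent to $\P$.

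Next I would verify the martingale property of $S$ under $\Qu$ by a Bayes-type computation. The integrability $\E_\P[Z_\infty |S_t|] = \E_\P[Z_t |S_t|] < \infty$ is granted by Theorem~\ref{T:1} (applied with $p=1$), and then for $s \leq t$ and $A \in \sigalg F_s$,
\begin{align*}
  \E_\Qu[S_t \indicator{A}]
  = \E_\P[Z_\infty S_t \indicator{A}]
  = \E_\P[Z_t S_t \indicator{A}]
  = \E_\P[Z_s S_s \indicator{A}]
  = \E_\Qu[S_s \indicator{A}],
\end{align*}
where the second equality uses the tower property (with $\E_\P[Z_\infty \mid \sigalg F_t] = Z_t$ and $S_t \indicator{A}$ being $\sigalg F_t$-measurable) and the third uses that $ZS$ is a $\P$-martingale. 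The higher-moment condition $\E_\Qu[|S_t|^p] = \E_\P[Z_t |S_t|^p] < \infty$ follows by the same tower argument.

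For the total variation estimate, the key observation is that $Z_\infty \leq 1 + \varepsilon'$ combined with $\E_\P[Z_\infty] = 1$ forces $\E_\P[(Z_\infty - 1)^+] = \E_\P[(Z_\infty - 1)^-] \leq \varepsilon'$, hence
\begin{align*}
  \|\P - \Qu\| = \E_\P[|Z_\infty - 1|] = 2 \E_\P[(Z_\infty - 1)^+] \leq 2 \varepsilon' < \varepsilon.
\end{align*}
The only mildly delicate point is ensuring the normalization $Z_0 = 1$, so that $Z_\infty$ is a genuine Radon-Nikodym derivative; beyond this bookkeeping, the corollary reduces to assembling the conclusions of Theorem~\ref{T:1}, and I do not expect any substantive obstacle.
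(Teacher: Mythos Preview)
Your proposal is correct and follows essentially the same route as the paper: apply Theorem~\ref{T:1} with a smaller parameter, set $\sfrac{\d\Qu}{\d\P}=Z_\infty$, and bound $\|\P-\Qu\|=\E_\P[|Z_\infty-1|]=2\E_\P[(Z_\infty-1)^+]$ using $Z_\infty\le 1+\varepsilon'$ together with $\E_\P[Z_\infty]=1$. The paper's proof is slightly terser (it does not spell out the Bayes computation or the moment check), and your use of $\varepsilon'=\varepsilon/3$ rather than $\varepsilon/2$ is a harmless way to secure the strict inequality.
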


To reformulate Corollary~\ref{C:1} in more abstract terms, let us
introduce the spaces 
\begin{align*}
  \mathcal Q_l  &= \left\{\Qu \sim \P:\, \text{$S$ is a $\Qu$--local martingale}\right\};\\
 \mathcal Q^{p} &= \left\{\Qu \sim \P:\, \text{$S$ is a $\Qu$--martingale with $\E_\Qu[|S_t|^p] < \infty$ for all $t \in \N_0$}\right\}, \qquad p > 0.
\end{align*}
Then Corollary~\ref{C:1} states that the space $\bigcap_{p \in \N}
 \mathcal Q^{p}$ is dense in $ \mathcal Q_l$ with respect to the
total variation norm $\| \cdot \|$.

\begin{proof}[Proof of Corollary~\ref{C:1}]
  Consider the $\P$--uniformly integrable martingale $Z$ of
  Theorem~\ref{T:1}, with $\varepsilon$ replaced by
  $\sfrac{\varepsilon}{2}$.  Then the probability measure
  $\Qu$, given by $\sfrac{\d \Qu }{\d \P} = Z_\infty$, satisfies the
  conditions of the assertion.  Indeed, we only need to observe
  that
  \begin{align*}
    \E_{\P} \br{\abs{Z_\infty - 1}}=2\E_{\P}\br{(Z_\infty
    -1)\indicator{\event{Z_\infty>1}}}\leq \varepsilon,
  \end{align*}
  where we used that $\E_{\P}\br{Z_\infty-1}=0$ and the assertion follows.
\end{proof}

\section{Generalized conditional expectation and local martingales}
For sake of completeness, we review the relevant facts related to
local martingales in discrete time. To start,  
note that for a sigma algebra $\sigalg G \subset \sigalg F$ and a
nonnegative random variable $Y$, not necessarily integrable, we can
define the so called generalized conditional expectation 
\begin{align*}
  \E_\P \br{Y \given \sigalg G} = \lim_{k \uparrow \infty} \E_\P\br{Y \wedge k \given \sigalg G}.
\end{align*}
Next, for a general random variable $W$ with $\E_\P\br{|W| \given \sigalg G} <
\infty$, but not necessarily integrable, we can define the generalized
conditional expectation 
\begin{align*}
  \E_\P\br{W \given \sigalg G} = \E_\P\br{W^+ \given \sigalg G}  - \E_\P\br{W^-\given \sigalg G}.
\end{align*}

For a stopping time $\tau$ and a stochastic process $X$ we write $X^{\tau}$ to denote the process obtained from stopping $X$ at time $\tau$.

\begin{definition}
  A stochastic process $S = (S_t)_{t \in \N_0}$ is 
  \begin{itemize}
  \item a $\P$--martingale if $\E_\P[|S_t|] < \infty$ and
    $\E_\P\br{S_{t+1} \given\sigalg F_t} = S_t$ for all $t \in \N_0$; 
  \item  a $\P$--local martingale if there exists a sequence
    $(\tau_n)_{n \in \N}$ of stopping times such that $\lim_{n
      \uparrow \infty} \tau_n = \infty$ and $S^{\tau_n}
    \indicator{\{\tau_n > 0\}}$ is a $\P$--martingale; 
  \item a $\P$--generalized martingale if $\E_\P\br{|S_{t+1}|\given\sigalg
    F_t} < \infty$ and $\E_\P\br{S_{t+1} \given \sigalg F_t} = S_t$ for all $t
    \in \N_0$. 
  \end{itemize}
\end{definition}

\begin{proposition}\label{P:1}
  Any $\P$--local martingale is a $\P$--generalized martingale.
\end{proposition}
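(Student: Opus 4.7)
My plan is to fix $t \in \N_0$ and transport the martingale identity satisfied by $S^{\tau_n}\indicator{\event{\tau_n > 0}}$ to $S$ itself through the $\sigalg F_t$-measurable events $A_n := \event{\tau_n > t}$. Because $\tau_n \uparrow \infty$, these events increase to $\Omega$; and on $A_n$ one has $\tau_n \geq t+1 > 0$, so $S^{\tau_n}_t = S_t$, $S^{\tau_n}_{t+1} = S_{t+1}$ and $\indicator{\event{\tau_n > 0}} = 1$ hold simultaneously. In particular, $|S_{t+1}|\indicator{A_n} \leq |S^{\tau_n}_{t+1}|\indicator{\event{\tau_n > 0}}$ pointwise, and the latter is integrable by the martingale hypothesis.

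For the first clause, this integrability gives a finite classical conditional expectation $\E_\P\br{|S_{t+1}|\indicator{A_n} \given \sigalg F_t}$. A truncation of $|S_{t+1}|$ at level $k$ followed by monotone convergence in $k$ identifies this with $\indicator{A_n}\E_\P\br{|S_{t+1}| \given \sigalg F_t}$ in the generalized sense recalled just before the statement. Hence $\E_\P\br{|S_{t+1}| \given \sigalg F_t} < \infty$ on $A_n$, and $A_n \uparrow \Omega$ yields this bound almost surely.

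The generalized conditional expectation $\E_\P\br{S_{t+1} \given \sigalg F_t}$ is now well-defined via its positive and negative parts. Multiplying the martingale identity for $S^{\tau_n}\indicator{\event{\tau_n > 0}}$ between times $t$ and $t+1$ by the bounded $\sigalg F_t$-measurable factor $\indicator{A_n}$ and substituting the pathwise identities yields, in the classical sense,
\begin{align*}
  \E_\P\br{S_{t+1}\indicator{A_n} \given \sigalg F_t} = S_t \indicator{A_n}.
\end{align*}
The same truncation argument applied to $S_{t+1}^\pm$ rewrites the left-hand side as $\indicator{A_n}\E_\P\br{S_{t+1} \given \sigalg F_t}$, and letting $n \uparrow \infty$ delivers the desired equality $\E_\P\br{S_{t+1} \given \sigalg F_t} = S_t$. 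The only subtle point is the bookkeeping between ordinary and generalized conditional expectations across the limit $n \uparrow \infty$; no deeper obstacle arises.
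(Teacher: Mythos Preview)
Your argument is correct and follows essentially the same route as the paper's proof: both localize on the $\sigalg F_t$--measurable event $\event{\tau_n>t}$, use that there $S_{t+1}=S^{\tau_n}_{t+1}$, and pass between the generalized conditional expectation of $S_{t+1}$ and the classical one of the stopped (integrable) process via truncation. One small wording issue: the definition only requires $\lim_{n\uparrow\infty}\tau_n=\infty$, not that $(\tau_n)_{n\in\N}$ be nondecreasing, so the events $A_n$ need not be nested; however, $\bigcup_n A_n=\Omega$ still holds almost surely, which is all your argument uses.
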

This proposition dates back to Theorem~II.42 in \cite{Meyer:1972}; see
also Theorem~VII.1 in \cite{Shiryaev:1996}.  Its reverse direction
would also be true but  will not be used below. A direct corollary of
the proposition is that a $\P$--local martingale $S$ with $\E_\P[|S_t|] <
\infty$ for all $t \in \N_0$ is indeed a $\P$--martingale. 

For sake of completeness, we will provide a proof of the proposition here.
\begin{proof}[Proof of Proposition~\ref{P:1}]
Let $S$ denote a $\P$--local martingale.
  Fix $t \in \N_0$ and a localization sequence $(\tau_n)_{n \in
    \N}$. For each $n \in \N$, we have, on the event 
  $\{\tau_n > t\}$,  
  \begin{align*}
    \E_\P \br{|S_{t+1}| \given \sigalg F_t} =  \lim_{k \uparrow \infty} \E_\P \br{|S_{t+1}| \wedge k \given \sigalg F_t} = 
    \lim_{k \uparrow \infty} \E_\P \br{|S_{t+1}^{\tau_n}| \wedge k \given \sigalg F_t} = \E_\P \br{|S_{t+1}^{\tau_n} |\given \sigalg F_t}  < \infty.
  \end{align*} 
  Since $\lim_{n \uparrow \infty} \tau_n = \infty$, we get $\E_\P
  \br{|S_{t+1}| \given \sigalg F_t}  < \infty$.  

  The next step we only argue for the case $d=1$, for sake of
  notation, but the general case follows in the same manner. 
  As above, again for fixed $n \in \N$, on the event
  $\{\tau_n > t\}$,  we get
  \begin{multline*}
    \E_\P \br{S_{t+1} \given \sigalg F_t} =  \lim_{k \uparrow \infty}
    \left( \E_\P \br{S_{t+1}^+ \wedge k \given \sigalg F_t} -  \E_\P
      \br{S_{t+1}^- \wedge k \given \sigalg F_t} \right) \\
    = 
    \lim_{k \uparrow \infty} \E_\P\br{(S_{t+1}^{\tau_n} \wedge k) \vee (-k) \given \sigalg F_t} = S_t.
  \end{multline*}
  Thanks again to  $\lim_{n \uparrow \infty} \tau_n = \infty$, the assertion follows.
\end{proof}

\begin{example}
  Assume that $(\Omega, \sigalg F, \P)$ supports two independent
  random variables $U$ and $\theta$ such that $U$ is uniformly
  distributed on $[0,1]$, and $\P[\theta = -1]  = \sfrac{1}{2} =
  \P[\theta = 1]$.  Moreover, let us assume that $\sigalg F_0 =
  \{\emptyset, \Omega\}$, $\sigalg F_1 = \sigma(U)$, and $\sigalg F_t
  = \sigma(U, \theta)$ for all $t \in \N \setminus \{1\}$. 
  Then the  stochastic process $S = (S_t)_{t \in \N_0}$, given by $S_t
  = \sfrac{\theta }{ U} \indicator{t \geq 2}$ is easily seen to be a
  $\P$--generalized martingale and a $\P$--local martingale with
  localization sequence $(\tau_n)_{n \in \N}$ given by  
  \begin{align*} 
    \tau_n = 1 \times \indicator{\{\sfrac{1}{U} >n\}} + \infty \times
    \indicator{\{\sfrac{1}{U} \leq n\}}.  
  \end{align*}
  However, we have $\E_\P[|S_2|] = \E_\P[\sfrac{1}{U}] = \infty$;
  hence $S$ is not a $\P$--martingale.  

  Now, consider the process $Z = (Z_t)_{t \in \N_0}$, given by 
  $
  Z_t = \indicator{t  = 0 } +  2 U  \indicator{t  \geq 1 }.
  $
  A simple computation shows that $Z$ is a strictly positive
  $\P$--uniformly integrable martingale. Moreover, since $Z_t S_t =
  2 \theta \indicator{t \geq 2}$, we have $\E_{\P}[Z_t |S_t|] \leq 2$
  for all $t \in \N_0$ and $Z S$ is a $\P$--martingale. 
  If we require the Radon-Nikodym to be bounded by a constant
  $1+\varepsilon \in (1, 2]$, we could consider $\widehat Z =
  (\widehat Z_t)_{t \in \N_0}$  with $\widehat Z_t = \indicator{t  = 0
  } +  \sfrac{(U \wedge \varepsilon) }{(\varepsilon -
    \sfrac{\varepsilon^2}{2})}  \indicator{t  \geq 1 }$. This
  illustrates the validity of Theorem~\ref{T:1} in the context of this
  example. 

  To see a difficulty in proving Theorem~\ref{T:1}, let us consider a
  local martingale $S' = (S'_t)_{t \in \N_0}$ with two jumps instead
  of one; for example, let us define 
  \begin{align*}
    S'_t =  \left( \indicator{\{U >  \sfrac{1}{2}\}} -  \indicator{\{U
    <  \sfrac{1}{2}\}} \right) \indicator{t \geq 1} +  \frac{\theta }{
    U} \indicator{t \geq 2}. 
  \end{align*}
  Again, it is simple to see that this specification makes $S'$ indeed
  a $\P$--local and $\P$--generalized martingale. However, now we have
  $\E_\P[Z_1 S_1'] = \sfrac{1}{2} \neq 0$; hence $Z S'$ is not a
  $\P$--martingale. Similarly, neither is $\widehat Z
  S'$. Nevertheless, as Theorem~\ref{T:1} states, there exists a
  uniformly integrable $\P$--martingale $Z'$ such that $Z' S'$ is a
  $\P$--martingale. 
\end{example}

More details on the previous example are provided in \cite{Ruf:LN_M}.

\section{Proof of Theorem~\ref{T:1}}  \label{S:proof}
In this section, we shall provide the proof of this note's main result. Its overall structure resembles
Theorem~1.3 in \cite{Prokaj:Rasonyi:2010}. The main novelty lies in Lemma~\ref{lem:1}, where the ideas of \cite{Rogers:1994} are adapted to obtain an equivalent martingale measure together with the required integrability condition (see Lemmata~\ref{L:1a} and \ref{L:2a}).  In contrast, the construction of the equivalent martingale measure in \cite{Prokaj:Rasonyi:2010}   is based on \cite{Dalang:Morton:Willinger}.

\begin{lemma} \label{lem:1}
  Let $\Qu$ denote some probability measure on
  $(\Omega, {\sigalg F})$, let ${\sigalg G}, {\sigalg H}$ be sigma algebras
  with ${\sigalg G} \subset {\sigalg H} \subset {\sigalg F}$,  let $W$ denote
  a  ${\sigalg H}$--measurable $d$-dimensional random vector with 
  \begin{equation}\label{eq:w}
    \E_{\Qu}\br{\abs{W}\given \sigalg G}< \infty 
    \qquad\text{and}\qquad 
    \E_\Qu\br{W\given{\sigalg G}} = 0.
  \end{equation}
  Suppose that $(\alpha_k)_{k \in \N}$ is a bounded family of
  ${\sigalg H}$--measurable random 
  variables with $\lim_{k \uparrow \infty} \alpha_k = 1$.
  Then for any $\varepsilon>0$ there
  exists a family $(V_k)_{k \in \N}$ of random variables such that 
  \begin{enumerate}[label={\rm(\roman{*})}, ref={\rm(\roman{*})}]
  \item\label{L:n:1} $V_k$ is $\sigalg H$--measurable and takes values in
    $(1-\varepsilon,1)$ for each $k \in \N$;
  \item\label{L:n:4} $\lim_{k \uparrow \infty} \indicator{\{\E_\Qu\br{V_k
      \alpha_k W\given\sigalg G} = 0\}}=1.$
  \end{enumerate}
\end{lemma}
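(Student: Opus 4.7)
The plan is to adapt the convex-potential trick of \cite{Rogers:1994}: I would look for $V_k$ as the derivative of a smooth convex potential evaluated at a scalar product $\lambda_k \cdot \alpha_k W$, with $\lambda_k$ chosen to satisfy a first-order optimality condition.

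To set this up, I would fix once and for all a smooth strictly convex $F: \R \to \R$ whose derivative takes values in $(1-\varepsilon,1)$; a convenient choice is $F(x) = (1-\varepsilon)x + \varepsilon \log(1 + e^x)$, whose derivative $F'(x) = (1-\varepsilon) + \varepsilon/(1 + e^{-x})$ lies in $(1-\varepsilon,1)$. For each $k \in \N$, consider the $\sigalg G$-measurable convex function $G_k(\lambda) = \E_\Qu\br{F(\lambda \cdot \alpha_k W) \given \sigalg G}$ for $\lambda \in \R^d$, almost surely finite because $F$ grows at most linearly and $\E_\Qu\br{\abs{W} \given \sigalg G} < \infty$. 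Via a standard measurable selection I would pick a $\sigalg G$-measurable $\lambda_k$ that minimizes $G_k$ on the (random) event $B_k$ where the infimum is attained, and set $\lambda_k = 0$ on $B_k^c$. Defining $V_k = F'(\lambda_k \cdot \alpha_k W)$ yields an $\sigalg H$-measurable variable taking values in $(1-\varepsilon,1)$, confirming \ref{L:n:1}; on $B_k$ the first-order condition $\nabla G_k(\lambda_k) = 0$ reads exactly $\E_\Qu\br{V_k \alpha_k W \given \sigalg G} = 0$, so that $B_k$ lies inside the event appearing in \ref{L:n:4}.

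The remaining and principal step is to verify $\indicator{B_k} \to 1$ almost surely. I would analyze the recession function $G_k^\infty(v) = v \cdot \mu_k + \varepsilon \E_\Qu\br{(v \cdot \alpha_k W)_- \given \sigalg G}$, where $\mu_k = \E_\Qu\br{\alpha_k W \given \sigalg G}$; attainment of the infimum of $G_k$ is equivalent to $G_k^\infty$ being strictly positive off its lineality space. Since $\alpha_k$ is bounded and tends to $1$, conditional dominated convergence yields $\mu_k \to 0$ and $G_k^\infty(v) \to \varepsilon \E_\Qu\br{(v \cdot W)_- \given \sigalg G}$, and the hypothesis $\E_\Qu\br{W \given \sigalg G} = 0$ forces the latter limit to be strictly positive whenever $v \cdot W$ is not almost surely zero given $\sigalg G$. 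The function $G_k$ is invariant under translations along directions $v$ with $v \cdot W = 0$ conditionally, so the minimization effectively takes place on the $\sigalg G$-measurable linear span of the conditional support of $W$, where coercivity holds for all $\omega$ outside an eventually null family. The main technical burden is to carry this reduction out measurably in $\omega$ --- in particular, the $\sigalg G$-measurable choice of a basis of the conditional support when $d>1$ --- and this is exactly the simplification the Rogers-style convex potential provides compared with the direct measurable selection used in \cite{Prokaj:Rasonyi:2010}.
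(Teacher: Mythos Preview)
Your overall strategy --- take a smooth strictly convex potential $F$ with $F'\in(1-\varepsilon,1)$, minimize $\lambda\mapsto \E_\Qu[F(\lambda\cdot\alpha_k W)\mid\sigalg G]$, and read off $V_k=F'(\lambda_k\cdot\alpha_k W)$ from the first-order condition --- is exactly the paper's approach, so you have the right idea. The difference lies in how the minimization is set up, and there your version leaves a real gap.

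You minimize $G_k$ over all of $\R^d$ and then have to (a) measurably select a minimiser on the random event $B_k$ where the infimum is attained, (b) show $B_k\in\sigalg G$, and (c) prove $\indicator{B_k}\to1$ via a recession-function argument that in turn requires quotienting by the $\sigalg G$--measurable lineality space of $G_k$. You candidly call this ``the main technical burden'' and then stop; but these are precisely the delicate points, and a ``standard measurable selection'' over a non-compact domain with a random feasibility set is not standard at all. The paper avoids the whole recession/attainment discussion by two small but decisive modifications: it minimizes over the \emph{compact} unit ball $B$, and it adds the quadratic penalty $\tfrac12|Ru|^2$, where $R$ is the maximal $\sigalg G$--measurable projection with $RW=0$. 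Compactness makes the measurable selection elementary (Lemma~\ref{lem:ms}); the penalty forces the limiting functional $h_\infty$ to have the \emph{unique} minimiser $0$, so that $U_k\to0$ follows from uniform convergence $h_k\to h_\infty$; and the good event becomes simply $\{|U_k|<1\}$, manifestly in $\sigalg G$. On that event the first-order condition reads $\E_\Qu[V_k\alpha_k W\mid\sigalg G]+RU_k=0$, and the extra term is killed by $RW=0$. In short, your recession-function route can presumably be pushed through, but the paper's compact-ball-plus-penalty device is what turns the sketch into a proof.
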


We shall provide two proofs of this lemma, the first one applies only
to the case $d=1$, but avoids the technicalities necessary for the
general case. 
\begin{proof}[Proof of Lemma~\ref{lem:1} in the one-dimensional case]
  With the convention $\sfrac{0}{0} := 1$, define, for each $k
  \in \N$, the random variable
  \begin{align*}
    C_k = \frac{\E_{\Qu}\br{\alpha_k W^+ \given \sigalg G}}
    {\E_{\Qu}\br{\alpha_k W^-  \given \sigalg G}} 
  \end{align*} 
  and note that
  \begin{align*}
    \lim_{k \uparrow \infty} |C_k - 1| =  
    \left|\frac{\E_{\Qu}\br{W^+  \given\sigalg G} }{\E_{\Qu}\br{W^-\given\sigalg G}}  - 1\right| =  
    \frac{1}{\E_{\Qu}\br{W^-  \given \sigalg G}} 
    \left| \E_{\Qu}\br{W^+  \given\sigalg G} - \E_{\Qu}\br{W^-\given\sigalg G}
    \right| 
    = 0. 
  \end{align*}
  Next, set
  \begin{align*}
    V_k = (1-\varepsilon) \vee \left( \indicator{\{W \geq 0\}}  (1
    \wedge C_k^{-1}) + \indicator{\{W < 0\}}  (1 \wedge C_k) \right), 
  \end{align*}
  and note that on the event $\{1-\varepsilon \leq C_k \leq
  \sfrac{1}{(1-\varepsilon)}\} \in \sigalg G$ we indeed have 
  $\E_\Qu\br{V_k \alpha_k W\given\sigalg G} = 0$, which concludes the proof.
\end{proof}

\begin{proof}[Proof of Lemma~\ref{lem:1} in the general case]
  The proof is similar to the proof of the Dalang--Morton--Willinger theorem
  based on utility maximisation, see \cite{Rogers:1994} and
  \citet[Section 6.6]{DS_mono} for detailed exposition. 
  But instead of using the exponential utility, we choose a strictly
  convex function (the negative of the utility) which is smooth and
  whose derivative takes values in $(1-\varepsilon,1)$. 
  Indeed, in what follows we
  fix the  convex function 
  \begin{displaymath}
    f(a)  = 
    a\zjel*{1+\frac{\varepsilon}{\pi}\zjel*{\arctan(a)-\frac\pi2}}, 
    \qquad a \in \R.     
  \end{displaymath}
  Then $f$ is smooth and a direct computation shows that $f$ is convex
  with derivative $f'$ taking values in the interval $(1-\varepsilon,1)$. 

  We formulated the statement with generalized conditional
  expectations. However, 
  changing the probability appropriately 
  with a ${\sigalg G}$--measurable density  we can assume,
  without loss of generality, that $W \in \Lp1(\Qu)$. Indeed, the
  probability measure $\Qu'$, given by  
  \begin{displaymath}
    \frac{\d\Qu'}{\d \Qu} =\frac{e^{-\E_\Qu\br{\abs{W}\given{\sigalg
            G}}}}{\E_\Qu\br{e^{-\E_\Qu\br{\abs{W}\given{\sigalg G}}}}}, 
  \end{displaymath}
  satisfies that  $W \in \Lp1(\Qu')$. Moreover, the (generalized) conditional
  expectations with respect to ${\sigalg G}$ are the same under $\Qu$ and
  $\Qu'$. Hence, in what follows, we assume that $\abs{W}$ is an
  integrable random variable.

  For $W$ there is a maximal ${\sigalg G}$--measurable orthogonal projection
  $R$ of $\R^d$ such that $RW=0$ almost surely. The maximality
  of $R$ means that for 
  any $\sigalg G$--measurable vector variable $U$ which is orthogonal
  to $W$ almost surely we have $RU=U$. We shall use this property at
  the end of this proof, such that on the event $\event{RU\neq U}$ the
  scalar product $W\cdot U$ is non-zero with zero conditional mean so
  its conditional law is non-degenerate.
  The idea behind
  the construction of $R$ is to consider the space of ${\sigalg
    G}$--measurable vector variables orthogonal to $W$ almost surely,
  and ``take an orthonormal basis over each $\omega\in\Omega$'' in a
  ${\sigalg G}$--measurable way.   For details of the proof, see
  Proposition~2.4 in  
  \cite{Rogers:1994} or  Section~6.2 in \cite{DS_mono}. The
  orthocomplement of the  
  range of $R$ is called the predictable range of $W$.

  Let $B$ now  denote the $d$--dimensional
  Euclidean unit ball and set $\alpha_\infty = 1$. For each $k \in \N
  \cup \{\infty\}$,  
  consider the random function (or field) $h_k$ over $B$, defined by the formula
  \begin{displaymath}
    \begin{aligned}
      h_k(u,\cdot)
      &=h_k(u)=
      \E_\Qu\br{f(\alpha_k W \cdot u)\given{\sigalg G}}+\frac12\abs{Ru}^2 
      \qquad \text{for all $u\in B$}.   
    \end{aligned}
  \end{displaymath}
  Since $f$ is continuous,  for each $k \in \N \cup \{\infty\}$,
  $h_k$ has a version that is continuous in $u$ for each $\omega \in \Omega$;
  see Lemma~\ref{lem:cont} below.
  Then for each compact subset $C$ of $B$ and each $k \in \N \cup
  \{\infty\}$ there is a ${\sigalg G}$--measurable 
  random vector $U^C_k$ taking values in $C$ such that 
  $h_k(U^C_k)=\min_{u\in C} h_k(u)$.
  This is a kind of measurable
  selection; 
  for sake of completeness we give
  an elementary proof below in 
  Lemma~\ref{lem:ms}. 

  Next, for each $k \in \N$, let $U_k$ be  a ${\sigalg G}$--measurable
  minimiser of  $h_{k}$ in the unit ball $B$ and define 
  \begin{displaymath}
    V_k=f'(\alpha_k W\cdot U_k).
  \end{displaymath}
  With this definition, \ref{L:n:1} follows directly.
  For \ref{L:n:4} we prove below that
  \begin{align}
    \label{it:1}
    \E_\Qu\br{V_k \alpha_k W\given {\sigalg G}}+ R U_k
    &=0,\qquad
      \text{on $\event*{\abs{U_k}<1}$}, \quad k \in \N;
    \\
    \label{it:2}
    \lim_{k \uparrow \infty} U_k
    &= 0,\qquad\text{almost surely.}
  \end{align}
  Then, on the event $\{\abs{U_k}<1\}$,  \eqref{it:1} and the
  ${\sigalg G}$--measurability of $R$ yield 
  \begin{displaymath}
    \abs*{\E_\Qu\br{V_k \alpha_k W \given {\sigalg G}}}^2=
    -\E_\Qu\br{V_k \alpha_k W\given{\sigalg G}}\cdot RU_k = 
    -\E_\Qu\br{V_k \alpha_k RW\given{\sigalg G}}\cdot U_k =0,
  \end{displaymath}
  giving us \ref{L:n:4}.

  \bigskip

  Thus, in order to complete the proof it suffices to argue
  \eqref{it:1}--\eqref{it:2}. For \eqref{it:1}, note
  that   $h_k$ is continuously
  differentiable
  almost surely for each $k \in \N$, see Lemma~\ref{lem:C1} below;
  morever,  
  its derivative at the minimum point $U_k$, which equals the
  left-hand side of \eqref{it:1},  
  must be zero  when $U_k$ is inside the ball $B$. 

  For \eqref{it:2} observe that $h_\infty$ has a unique minimiser
  over $B$ which is the zero vector. To see this, observe that
  \begin{displaymath}
    h_\infty(u)=
    \E_\Qu\br{f(W\cdot(I-R) u)\given {\sigalg G}}+\frac12\abs{Ru}^2,
  \end{displaymath}
  where $I$ denotes the $d$-dimensional identity matrix.
  So to see that the zero
  vector is the unique minimiser it is enough to show that
  $\inf_{\abs{u}\geq \delta} h_\infty(u)>0 = h_\infty(0)$ almost
  surely for any $\delta\in(0,1]$. 
  Let $U$ be a ${\sigalg G}$--measurable minimiser of $h_\infty$ over
  $\set{u}{\abs{u}\in[\delta,1]}$. Then
  \begin{align*}
    \E_\Qu\br{f(W\cdot (I-R)U)\given{\sigalg G}}
    &>0,\qquad\text{on $\{(I-R) U\neq0\}$};\\
    \abs{RU}^2\geq \delta^2&>0,\qquad\text{on $\{(I-R)U=0\}$}.
  \end{align*}
  The first part follows from the strict convexity of $f$ in
  conjunction with Jensen's inequality, taking into 
  account that  $\E_\Qu\br{W\given{\sigalg G}}=0$ and that $W\cdot(I-R)U$
  has non-trivial conditional law  on $\{(I-R) U\neq0\}$ by the
  maximality of $R$.  Whence
  $\inf_{\abs{u}\geq\delta}h_\infty(u)>0=h_\infty(0)$, as required.  
  
  Finally, as $\lim_{k \uparrow \infty} \alpha_k = 1$ and $f$ is Lipschitz 
  continuous we have
  \begin{displaymath}
    \lim_{k \uparrow \infty} \sup_{u\in B}\abs{h_k(u)-h_\infty(u)}  =   \lim_{k \uparrow \infty} \sup_{u\in B \cap \Q^d}\abs{h_k(u)-h_\infty(u)} = 0
    \qquad\text{almost surely}.
  \end{displaymath}
  Hence, any ${\sigalg G}$--measurable sequence $(U_k)_{k \in \N}$ of
  minimisers of 
  $h_k$ converges to zero, the unique minimiser of $h_\infty$,
  almost surely. This shows \eqref{it:2} and completes the proof. 
\end{proof}

\begin{lemma} \label{L:1a}
  Let $\Qu$ denote some probability measure on $(\Omega, \sigalg
  F)$,
  let $\sigalg G, \sigalg H$ be sigma algebras with $\sigalg G \subset
  \sigalg H \subset \sigalg F$,  let $Y$ denote a one-dimensional
  random variable with $Y \geq 0$ and $\E_\Qu\br{Y \given \sigalg H}  <
  \infty$, and let $W$ denote  a  ${\sigalg H}$--measurable
  $d$-dimensional random vector such that \eqref{eq:w} holds. 
  Then, for any $\varepsilon> 0$, there exists a random
  variable $z$ such that 
  \begin{enumerate}[label={\rm(\roman{*})}, ref={\rm(\roman{*})}]
  \item\label{L:1:1} $z$ is $\sigalg H$--measurable and takes values
    in $(0,1 + \varepsilon)$; 
  \item\label{L:1:3} $\Qu[z < 1 - \varepsilon] < \varepsilon$;
  \item\label{L:1:4} $\E_{\Qu}\br{z \given \sigalg G} = 1$;
  \item\label{L:1:5} $\E_{\Qu}\br{z W\given \sigalg G} = 0$;
  \item\label{L:1:6} $\E_{\Qu}\br{z Y\given \sigalg G} < \infty$.
  \end{enumerate}
\end{lemma}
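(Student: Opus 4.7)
The plan is to reduce Lemma \ref{L:1a} to Lemma \ref{lem:1} by enriching the latter's approximate vanishing-mean property with an $\sigalg H$--measurable truncation of $Y$, and then normalising so that the conditional mean becomes exactly $1$.

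First I would set $Y' := \E_\Qu\br{Y \given \sigalg H}$, which is $\sigalg H$--measurable and $\Qu$--a.s.\ finite by hypothesis. Define the $\sigalg H$--measurable cutoff $\alpha_k := 1 \wedge k/Y'$ (with the convention $k/0 := \infty$) for $k \in \N$. Then $\alpha_k \in (0,1]$, $\alpha_k \uparrow 1$ $\Qu$--a.s., and $\alpha_k Y' \leq k$, so by the tower property $\E_\Qu\br{\alpha_k Y \given \sigalg G} \leq k$ for each $k$. Applying Lemma \ref{lem:1} with this $(\alpha_k)$ and with $\varepsilon$ replaced by a smaller $\varepsilon_L$ (say $\varepsilon_L := \varepsilon/(1+\varepsilon)$) produces $\sigalg H$--measurable $V_k \in (1-\varepsilon_L, 1)$ with $\indicator{B_k} \to 1$ $\Qu$--a.s., where $B_k := \event{\E_\Qu\br{V_k \alpha_k W \given \sigalg G} = 0} \in \sigalg G$.

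Let $\gamma_k := \E_\Qu\br{V_k \alpha_k \given \sigalg G}$; by dominated convergence and the fact that $V_k$ converges a.s.\ to a value in $[1-\varepsilon_L, 1]$ (in the construction from Lemma \ref{lem:1}), one has $\gamma_k > 1/(1+\varepsilon)$ for all sufficiently large $k$ $\Qu$--a.s. Introduce the $\sigalg G$--measurable integer
\[
\tau := \inf \set{k \in \N}{\indicator{B_k} = 1,\ \gamma_k \geq 1/(1+\varepsilon),\ \Qu\br{Y' > \tfrac{k(1-\varepsilon_L)}{1-\varepsilon} \given \sigalg G} < \varepsilon/2},
\]
which is $\Qu$--a.s.\ finite, and set $z := V_\tau \alpha_\tau/\gamma_\tau$; this is $\sigalg H$--measurable. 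Decomposing over $\{\tau = k\} \in \sigalg G$ and pulling the indicators through the conditional expectations yields \ref{L:1:4}--\ref{L:1:6} from the identities $\E_\Qu\br{V_k \alpha_k \given \sigalg G} = \gamma_k$, $\E_\Qu\br{V_k \alpha_k W \given \sigalg G} = 0$ on $\{\tau = k\} \subseteq B_k$, and $\E_\Qu\br{V_k \alpha_k Y \given \sigalg G} \leq k$; \ref{L:1:1} follows from the prescribed ranges of $V_\tau, \alpha_\tau, \gamma_\tau$; and \ref{L:1:3} holds because $\{z < 1-\varepsilon\}$ is contained in $\event{Y' > \tau(1-\varepsilon_L)/(1-\varepsilon)}$, an event of $\Qu$--probability less than $\varepsilon/2$ by the third clause in the definition of $\tau$ and the $\sigalg G$--measurability of $\tau$.

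The main obstacle I foresee is that the natural lower bound on $z$ (needed for \ref{L:1:3}) would require controlling $\alpha_\tau$ from below, a condition that is only $\sigalg H$--measurable, whereas pulling $\indicator{\tau = k}$ through conditional expectations forces $\tau$ to be $\sigalg G$--measurable. Replacing the $\sigalg H$--measurable bound on $\alpha_k$ by its $\sigalg G$--measurable conditional-probability surrogate $\Qu\br{Y' > \cdot \given \sigalg G}$ is the key trick; it is precisely this substitution that explains why condition \ref{L:1:3} is phrased as an unconditional probability bound rather than an almost-sure pointwise inequality.
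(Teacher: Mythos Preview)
Your approach is essentially the paper's: define $\sigalg H$--measurable weights $\alpha_k\uparrow 1$ damping $Y'=\E_\Qu\br{Y\given\sigalg H}$, invoke Lemma~\ref{lem:1} to get $V_k$, pick a $\sigalg G$--measurable index $K$ (your $\tau$) on which the three good events hold, and set $z=V_K\alpha_K/\E_\Qu\br{V_K\alpha_K\given\sigalg G}$. The paper's $\alpha_k$ differs cosmetically (it uses $1/Y'$ instead of $k/Y'$ on $\{Y'>k\}$), and its third condition on $K$ is the unconditional $\Qu\br{Y'>K}<\varepsilon$ rather than your conditional surrogate; both variants work, and your inclusion $\{z<1-\varepsilon\}\subset\{Y'>\tau(1-\varepsilon_L)/(1-\varepsilon)\}$ is correct.

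One small gap: with your choice $\varepsilon_L=\varepsilon/(1+\varepsilon)$ you have $1-\varepsilon_L=1/(1+\varepsilon)$ exactly, so the lower bound $V_k>1-\varepsilon_L$ from Lemma~\ref{lem:1} only yields $\liminf_k\gamma_k\geq 1/(1+\varepsilon)$, not the strict ``$\gamma_k>1/(1+\varepsilon)$ eventually'' that you need for $\tau<\infty$ and for \ref{L:1:1}. You patch this by appealing to the \emph{proof} of Lemma~\ref{lem:1} to say $V_k$ converges, but the stated range $[1-\varepsilon_L,1]$ for the limit still does not exclude $1-\varepsilon_L$. The paper sidesteps this by taking $V_k\in(1/(1+\varepsilon/2),1)$, i.e.\ choosing $\varepsilon_L$ strictly smaller so that $1-\varepsilon_L>1/(1+\varepsilon)$; then $\liminf_k\gamma_k\geq 1-\varepsilon_L>1/(1+\varepsilon)$ follows from the statement of Lemma~\ref{lem:1} alone. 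Adopting that adjustment removes the dependence on the internal construction of $V_k$ and closes the argument cleanly.
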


\begin{proof}
  For each $k \in \N$, define  the $(0,1]$--valued,  $\sigalg
  H$--measurable random variable 
  \begin{align*}
      \alpha_k = \indicator{\{ \E_\Qu\br{Y \given \sigalg H}  \leq k\}}
    + 
    \frac{1}{\E_\Qu\br{Y \given \sigalg H} }  
    \indicator{\{ \E_\Qu\br{Y \given \sigalg H}  > k\}}
  \end{align*}
  and note that $\lim_{k \uparrow \infty} \alpha_k =
  1$. Lemma~\ref{lem:1} now yields the existence of a family
  $(V_k)_{k\in \N}$ of $\sigalg H$--measurable random variables such
  that   $V_k \in (\sfrac{1}{(1+\varepsilon/2)},1)$  and 
  $\lim_{k\uparrow\infty}\indicator{\{\E_\Qu\br{V_k\alpha_k
      W\given\sigalg G}=0\}}=1$.
  Note that this yields a $\sigalg G$--measurable random variable $K$,
  taking values in $\N$, such that 
  $\E_\Qu\br{V_K\alpha_K W\given\sigalg G} = 0$, 
  $\E_\Qu\br{V_K\alpha_K\given\sigalg G}>\sfrac{1}{(1+\varepsilon)}$, 
  and $\Qu\br{\E_\Qu\br{Y \given \sigalg H}>K}<\varepsilon$. Setting now 
  \begin{align*}
    z= \frac{V_K \alpha_K} {\E_{\Qu}\br{V_K \alpha_K \given \sigalg G}}
  \end{align*}	
  yields a random variable with the claimed properties.
\end{proof}

\begin{lemma}
  \label{L:2a}
 	Fix $n \in \N_0$, let  $\Qu$ denote some probability measure
  on $(\Omega, \sigalg F)$ such that $S$ is a $\Qu$--local
  martingale, and let $Y$ denote a one-dimensional
  random variable with $Y \geq 0$ and
  $\E_\Qu\br{Y \given \sigalg F_n}  < \infty$. 
  Then,  for each $\varepsilon > 0$,  there exists a probability measure $\Qu'$, equivalent to $\Qu$,
  with density $Z^{(n)}=\sfrac{\d\Qu'}{\d\Qu}$ such that
  \begin{enumerate}[label={\rm(\roman{*})}, ref={\rm(\roman{*})}]
  \item\label{L:2a:2} $Z^{(n)} \in (0,1 + \varepsilon)$;
  \item\label{L:2a:3} $\Qu[Z^{(n)} < 1 - \varepsilon] < \varepsilon$;
  \item\label{L:2a:5} $ S$ is a $\Qu'$--local martingale;
  \item\label{L:2a:6} $\E_{\Qu'}[Y] < \infty$.
  \end{enumerate}
\end{lemma}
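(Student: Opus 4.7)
The approach is to build $Z^{(n)}$ as an infinite product of one-step densities, each produced by Lemma~\ref{L:1a} applied along the filtration. At time step $t \geq 0$, apply Lemma~\ref{L:1a} with $\sigalg G = \sigalg F_t$, $\sigalg H = \sigalg F_{t+1}$, and $W = S_{t+1} - S_t$; its hypotheses~\eqref{eq:w} hold because, by Proposition~\ref{P:1}, $S$ is a $\Qu$--generalized martingale, yielding $\E_\Qu\br{|S_{t+1}-S_t|\given \sigalg F_t}<\infty$ and $\E_\Qu\br{S_{t+1}-S_t\given\sigalg F_t}=0$. Choose step-tolerances $\varepsilon_t>0$ with $\prod_t(1+\varepsilon_t)<1+\varepsilon^2$ and $\sum_t\varepsilon_t<\infty$; the Lemma then produces one-step factors $z_{t+1}$, and we set $Z^{(n)} = \prod_{t\geq 0} z_{t+1}$.

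Properties~\ref{L:2a:2} and~\ref{L:2a:3} then follow by routine arguments. The upper bound $Z^{(n)}<1+\varepsilon^2<1+\varepsilon$ is immediate from $z_{t+1}<1+\varepsilon_t$; positivity follows from Borel--Cantelli applied to $\{z_{t+1}<1-\varepsilon_t\}$, whose probabilities sum to $<\sum\varepsilon_t<\infty$. The partial products form a non-negative bounded $\Qu$--martingale (since $\E_\Qu\br{z_{t+1}\given\sigalg F_t}=1$), converging in $\Lp1(\Qu)$ to $Z^{(n)}$ with $\E_\Qu[Z^{(n)}]=1$, so Markov's inequality yields $\Qu[Z^{(n)}<1-\varepsilon] \leq \E_\Qu[(1-Z^{(n)})^+]/\varepsilon = \E_\Qu[(Z^{(n)}-1)^+]/\varepsilon \leq \varepsilon$. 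For property~\ref{L:2a:5}, the per-step orthogonality $\E_\Qu\br{z_{t+1}(S_{t+1}-S_t)\given\sigalg F_t}=0$ from item~\ref{L:1:5} of Lemma~\ref{L:1a}, combined with the telescoping of the product and Bayes' formula, gives $\E_{\Qu'}\br{S_{t+1}\given\sigalg F_t}=S_t$ for every $t$; thus $S$ is a $\Qu'$--generalized martingale, and since $Z^{(n)}$ is bounded, the $\Qu$-integrability of each stopped process $S^{\sigma_k}$ transfers to $\Qu'$-integrability, so the original $\Qu$--localization sequence of $S$ still localizes $S$ under $\Qu'$.

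The main obstacle is~\ref{L:2a:6}: the bound $Z^{(n)}\leq 1+\varepsilon$ alone yields only $\E_{\Qu'}[Y] \leq (1+\varepsilon)\E_\Qu[Y]$, which can be infinite since only $\E_\Qu\br{Y\given\sigalg F_n}$ is assumed a.s.\ finite. To force absolute integrability, one must exploit item~\ref{L:1:6} of Lemma~\ref{L:1a}: the $\alpha_k$-truncation inside its proof shrinks $z_{t+1}$ on events where the input auxiliary variable is large. The idea is to proceed in two passes. A \emph{backward} pass, from $t=n-1$ down to $t=0$, feeds at each step an auxiliary variable tracking the residual conditional expectation of $Y$ under the partial measure change (roughly $\E_\Qu\br{z_{t+2}\cdots z_n \E_\Qu\br{Y\given\sigalg F_n}\given\sigalg F_{t+1}}$, a.s.\ finite by construction). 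A final application with trivial $\sigalg G$, $\sigalg H=\sigalg F_0$, and $W=0$ then upgrades the resulting $\sigalg F_0$-conditional finiteness into absolute integrability. Meanwhile, a \emph{forward} pass for $t\geq n$ only needs to preserve the martingale property, since the bound $\E_\Qu\br{Z^{(n)}Y\given\sigalg F_n}\leq(1+\varepsilon)\E_\Qu\br{Y\given\sigalg F_n}$ already follows from $Z^{(n)}\leq 1+\varepsilon$ and the backward construction. The careful accounting of the truncation errors along the chain is the delicate part; once achieved, the two passes combined give $\E_{\Qu'}[Y]<\infty$.
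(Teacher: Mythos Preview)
Your backward pass is exactly the paper's construction, and that is the whole proof: a \emph{finite} product $Z^{(n)}=\prod_{t=0}^{n}z_t$ built by iterating Lemma~\ref{L:1a} from $t=n$ down to $t=0$ (with the convention $\sigalg F_{-1}=\{\emptyset,\Omega\}$, $\Delta S_0=0$ playing the role of your ``trivial $\sigalg G$, $W=0$'' step). At step $t$ one feeds $Y$ replaced by $Y\prod_{i=t+1}^{n}z_i$, so that Lemma~\ref{L:1a}\ref{L:1:6} propagates the finiteness
\[
\E_\Qu\Bigl[\,Y\prod_{i=t}^{n}z_i\,\Big|\,\sigalg F_{t-1}\Bigr]<\infty
\]
all the way down to the trivial $\sigma$-algebra, yielding \ref{L:2a:6} directly. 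Your third paragraph describes precisely this recursion, but then hedges (``the delicate part; once achieved\ldots''); there is nothing delicate left---the chain above \emph{is} the accounting.

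The infinite product and the ``forward pass for $t\geq n$'' are superfluous and should be dropped. Since the finite product $Z^{(n)}=\prod_{t=0}^{n}z_t$ is $\sigalg F_n$--measurable, for any $t>n$ the density is already $\sigalg F_{t-1}$--measurable, and Bayes' formula gives
\[
\E_{\Qu'}\bigl[\Delta S_t\,\indicator{\{\tau\ge t\}}\,\big|\,\sigalg F_{t-1}\bigr]
=\E_{\Qu}\bigl[\Delta S_t\,\indicator{\{\tau\ge t\}}\,\big|\,\sigalg F_{t-1}\bigr]=0,
\]
so the $\Qu$--localizing sequence works for $\Qu'$ with no extra factors needed. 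Eliminating the forward pass removes the Borel--Cantelli step, the $\Lp1$ limit argument, and the indexing headaches (your formula $Z^{(n)}=\prod_{t\ge0}z_{t+1}$ omits the $z_0$ produced by the trivial-$\sigalg G$ step). Your Markov-inequality argument for \ref{L:2a:3} is fine and in fact slicker than the paper's union bound; it works equally well for the finite product.
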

\begin{proof}
  In this proof, we use the convention $\sigalg F_{-1} = \{\emptyset,
  \Omega\}$ and $\Delta S_0 = 0$. 
  Set $\widetilde\varepsilon>0$ be sufficiently small such that
  \begin{displaymath}
    (n+1)\widetilde\varepsilon\leq\varepsilon,\qquad
    (1+\widetilde\varepsilon)^{n+1}\leq 1+\varepsilon,\qquad
    (1-\widetilde\varepsilon)^{n+1}\geq 1-\varepsilon.
  \end{displaymath}
  We shall construct a sequence $(z_0, \cdots, z_n)$ iteratively starting  with
  $z_n$  and proceeding backward until $z_0$ 
  such that for each $t=0,1,\dots,n$,
  \begin{equation}\label{eq:t1}
    z_t\leq 1+\tilde\varepsilon,\quad
    \Qu\br{z_t<1-\tilde\varepsilon}<\tilde\varepsilon,\quad
    \E_{\Qu}\br{z_t\given \sigalg F_{t-1}}=1,\quad
    \E_{\Qu}\br{z_t\Delta S_t\given \sigalg F_{t-1}}=0,
  \end{equation}
  and
  \begin{equation}
    \label{eq:t2}
    \E_{\Qu}\br{Y\prod_{i=t}^{n}z_i\given\sigalg F_{t-1}}<\infty.      
  \end{equation}
  
  For $t=n$ we apply Lemma~\ref{L:1a}
  with $\varepsilon$  replaced
  by $\widetilde \varepsilon$
  and with $\sigalg G = \sigalg F_{n-1}$, $\sigalg H = \sigalg F_n$,
  and $W = \Delta S_n$. We have $\E\br{Y|\sigalg H}<\infty$ by assumption and
  $\E_{\Qu}\br{|W|\given\sigalg G} < \infty$ and
  $\E_{\Qu}\br{W\given\sigalg G} = 0$ by Proposition~\ref{P:1}. Hence,
  Lemma~\ref{L:1a} provides us an appropriate $z_n$ satisfying
  \eqref{eq:t1} and \eqref{eq:t2} for $t=n$.

  For $0\leq t <n$ assume that we have random
  variables $z_{t+1}, \cdots, z_n$ satisfying
  \eqref{eq:t1} and \eqref{eq:t2}, 
  in particular,
  $\E_{\Qu}\br{Y \prod_{i = t+1}^n z_i\given \sigalg F_t} <
  \infty$. We now
  obtain a random variable $z_t$ by again applying Lemma~\ref{L:1a},
  with $\varepsilon$ replaced by $\widetilde \varepsilon$
  and with $\sigalg G = \sigalg
  F_{t-1}$, $\sigalg H = \sigalg F_t$, $W = \Delta S_t$, and $Y$
  replaced by $Y\prod_{i = t+1}^n z_i $.   
  
  With the family $(z_0, \cdots, z_n)$ now given, let us define 
  $Z^{(n)} = \prod_{i = 0}^ n z_i$ and $\Qu'$ by
  $\sfrac{\d\Qu'}{\d\Qu}=Z^{(n)}$. With this definition of $Z^{(n)}$ 
  \ref{L:2a:2},\ref{L:2a:3}, and \ref{L:2a:6} are clear by the choice
  of $\tilde\varepsilon$. To argue that 
  $S$ is a $\Qu'$--local martingale, let $\tau$ be an $(\sigalg
  F_t)_{t\geq0}$ stopping time such that the stopped process $S^\tau$
  is a martingale. Then $\Delta S_t\indicator{\tau\geq t}$ is $\Qu'$ integrable
  random vector as $Z^{(n)}$ is bounded from above. Moreover,  Bayes' rule yields
  \begin{displaymath}
    \E_{\Qu'}\br{\Delta S_t \indicator{\{\tau\geq t\}}\given\sigalg F_{t-1}}=
    \frac{\E_{\Qu}\br{Z^{(n)}\Delta S_t \indicator{\{\tau\geq t\}}\given\sigalg F_{t-1}}}{\E_{\Qu}\br{Z^{(n)}\given\sigalg F_{t-1}}}=
  \indicator{\{\tau\geq t\}}\E_{\Qu}\br{z_t\Delta S_t\given\sigalg F_{t-1}}=0.
  \end{displaymath}
  So any sequence of stopping times that localizes $S$ under $\Qu$
  also localizes it under $\Qu'$. This shows \ref{L:2a:5}; hence the lemma is proven. 
\end{proof}

\begin{proof}[Proof of Theorem~\ref{T:1}]
    We inductively construct a sequence $(\Qu^{(n)})_{n\in\N_0}$ of probability measures, equivalent to $\P$,
     and a sequence $(\varepsilon^{(n)})_{n\in\N_0}$ of positive reals
     using Lemma~\ref{L:2a}.
    To start, set $\Qu^{(-1)} = \P$.
    Now, fix
    $n \in \N_0$ for the moment and suppose that we have $\Qu^{(n-1)}$ and
    $(\varepsilon^{(m)})_{0\leq m<n}$ such that 
    $\prod_{m=0}^{n-1}(1+\varepsilon^{(m)})<1+\varepsilon$.
    Choose $\varepsilon^{(n)}$ to be
    sufficiently small such that
    $\prod_{m=0}^{n}(1+\varepsilon^{(m)})<1+\varepsilon$,  and
    for any $A\in\sigalg F$ with $\Qu^{(n-1)}\br{A}\leq \varepsilon^{(n)}$
    we have $\P\br{A}<2^{-n}$. Then apply Lemma~\ref{L:2a} with $\varepsilon$
    replaced by $\varepsilon^{(n)}$,
    and with $\Qu = \Qu^{(n-1)}$ and $Y = \e^{|S_n|}$ to obtain a
    probability measure 
    $\Qu^{(n)}$ with density 
    $Z^{(n)}$, that is
    $\d\Qu^{(n)}=Z^{(n)}\d\Qu^{(n-1)}=\zjel{\prod_{m=0}^nZ^{(m)}}\d\P$. 

  Due to the fact
    \begin{displaymath}
      \P\br*{\abs{1-Z^{(n)}}>\varepsilon^{(n)}}\leq2^{-n}\quad\text{as}\quad
      \Qu^{(n-1)}\br*{\abs{1-Z^{(n)}}>\varepsilon^{(n)}}\leq\varepsilon^{(n)},
    \end{displaymath}
     the Borel-Cantelli lemma yields $\sum_{n \in \N_0} \abs{1-Z^{(n)}} < \infty$; hence the infinite product $Z_{\infty}=\prod_{n=0}^\infty Z^{(n)}$
    converges and is positive $\P$--almost surely. It is clear that
    $Z_{\infty}\leq 1+\varepsilon$.

  We define the probability measure $\Qu$ by $\sfrac{\d\Qu}{\d
    \P}=Z_{\infty}$ and denote the corresponding density process by
  $Z_t=\E_{\P}\br{Z_\infty\given\sigalg F_t}$, for each $t \in
  \N_0$. 
As $\prod_{m>t}Z^{(m)}<1+\varepsilon$ 
  we have  $\Qu\leq (1+\varepsilon)\Qu^{(t)}$ 
  and as a result 
  \begin{displaymath}
    \E_{\P}\br*{Z_te^{\abs{S_t}}}=\E_{\Qu}\br*{e^{\abs{S_t}}}\leq
    (1+\varepsilon)\E_{\Qu^{(t)}}\br*{e^{\abs{S_t}}}<\infty 
  \end{displaymath}
  by the choice of $\Qu^{(t)}$; hence $\E_{\P}\br{Z_t\abs{S_t}^p}<\infty$ for all $t,p \in \N_0$.
  
  It remains to argue that $ZS$ is a $\P$--martingale or, equivalently, that $S$ is a
 $\Qu$--martingale.
Since we already have established $\E_{\Qu}\br{\abs{S_t}}<\infty$ for all $t \in \N_0$, it suffices to fix $t \in \N$ and to prove that $\E_{\Qu}\br{S_t\given\sigalg F_{t-1}} = S_{t-1}$. To this end, recall that $S$ is a $\Qu^{(n)}$--local martingale for each $n \in \N_0$ by Lemma~\ref{L:2a}\ref{L:2a:5} and note that
 dominated convergence, Bayes formula, and Proposition~\ref{P:1} yield
  \begin{align*}
    \E_{\Qu}\br{S_t\given\sigalg F_{t-1}}Z_{t-1}
    & =
    \E_{\P}\br{S_tZ_\infty\given\sigalg F_{t-1}} 
     =
    \lim_{n\uparrow\infty}\E_{\P}\br*{S_t\prod_{m=0}^nZ^{(m)}\given\sigalg F_{t-1}}\\
    & =
    \lim_{n\uparrow\infty}\E_{\Qu^{(n)}}\br{S_t\given\sigalg F_{t-1}}
      \left.\frac{\d\Qu^{(n)}}{\d\P}\right|_{\sigalg F_{t-1}} = S_{t-1}
      \lim_{n\uparrow\infty}\E_{\P}\br*{\prod_{m=0}^{n}Z^{(m)}\given\sigalg F_{t-1}}\\
      &=S_{t-1}Z_{t-1}.
  \end{align*}
This
  completes the proof. 
\end{proof}

\appendix
\section{Appendix}

In this appendix, we provide some  
measurability results necessary for
the proof of Lemma~\ref{lem:1}.  We write  $C(K)$ for the space of continuous functions over some metric space $(K,m)$ and equip $C(K)$ with the supremum norm.  

When a random variable takes values in an abstract measurable space we
call it a random element from that space. In all cases below, the measurable space is a metric space
equipped with its Borel $\sigma$-algebra, the $\sigma$-algebra
generated by the open sets. In particular, $\xi$ is a random element
from $C(K)$ if and only if $\xi(u)$ is a random variable for each $u$
and $u\mapsto\xi(u,\omega)$ is continuous for each $\omega\in\Omega$.

\begin{lemma}\label{lem:cont}
  Let  ${\sigalg G}$ be a sigma algebra
  with ${\sigalg G}  \subset {\sigalg F}$ and let $\xi$ be a random
  element in $C(K)$,  where $(K,m)$ is a compact metric space.
  Suppose that $\E_\P[\sup_{u\in K} \abs{\xi(u)}]<\infty$ and let
  $\eta(u)=\E_\P\br{\xi(u)\given{\sigalg G}}$ for all $u\in K$. Then 
  $(\eta(u))_{u\in K}$ has a continuous modification.
\end{lemma}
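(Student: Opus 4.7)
The plan is to build a continuous modification pointwise from a countable dense skeleton and then extend by uniform continuity. Let $D \subset K$ be a fixed countable dense subset (which exists since $K$ is compact, hence separable). For each $u \in D$, fix once and for all a version of the random variable $\eta(u) = \E_\P\br{\xi(u) \given \sigalg G}$. The target is a random element $\widetilde \eta$ of $C(K)$ with $\widetilde\eta(u) = \eta(u)$ almost surely for every $u \in K$.

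The key input is the modulus of continuity of $\xi$. Since $K$ is compact and $u \mapsto \xi(u,\omega)$ is continuous for each $\omega$, the quantity
\begin{displaymath}
  \omega_\xi(\delta) = \sup\set{\abs{\xi(u) - \xi(v)}}{u,v \in D,\ m(u,v) \leq \delta}
\end{displaymath}
is a random variable (supremum over a countable set) satisfying $\omega_\xi(\delta) \leq 2 \sup_{u \in K} \abs{\xi(u)}$ and $\lim_{\delta \downarrow 0} \omega_\xi(\delta) = 0$ almost surely. By conditional dominated convergence, $\lim_{\delta \downarrow 0} \E_\P\br{\omega_\xi(\delta) \given \sigalg G} = 0$ almost surely. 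For any $u, v \in D$ with $m(u,v) \leq \delta$ we have
\begin{displaymath}
  \abs{\eta(u) - \eta(v)} \leq \E_\P\br{\abs{\xi(u) - \xi(v)} \given \sigalg G} \leq \E_\P\br{\omega_\xi(\delta) \given \sigalg G}
\end{displaymath}
almost surely. Since $D$ is countable, outside a single $\P$--null set $N$, the inequality holds simultaneously for all pairs $u,v \in D$ and all rational $\delta > 0$. On $\Omega \setminus N$ the map $\eta(\cdot, \omega)\colon D \to \R$ is therefore uniformly continuous, so it admits a unique continuous extension $\widetilde \eta(\cdot, \omega)$ to all of $K$; on $N$ we set $\widetilde \eta \equiv 0$. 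By construction $\widetilde \eta$ is a random element of $C(K)$.

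It remains to check that $\widetilde \eta(u) = \eta(u)$ almost surely for each $u \in K$, not just $u \in D$. Fix $u \in K$ and a sequence $(u_n)_{n \in \N}$ in $D$ with $u_n \to u$. Then $\xi(u_n) \to \xi(u)$ pointwise, and the common bound $\sup_{v \in K} \abs{\xi(v)} \in \Lp 1(\P)$ together with conditional dominated convergence gives $\E_\P\br{\xi(u_n) \given \sigalg G} \to \E_\P\br{\xi(u) \given \sigalg G}$ almost surely. On the other hand, $\widetilde\eta(u_n) = \eta(u_n) = \E_\P\br{\xi(u_n) \given \sigalg G}$ almost surely (our fixed versions), and $\widetilde\eta(u_n) \to \widetilde\eta(u)$ by continuity of $\widetilde\eta$, yielding the desired equality.

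The main technical point is the passage from a pointwise definition to a simultaneously continuous version: this is handled by choosing the uniform modulus $\omega_\xi(\delta)$ so that a single null set suffices for the whole countable grid $D$. Once this is in place, the extension and the identification on the full $K$ are routine applications of conditional dominated convergence.
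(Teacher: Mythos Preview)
Your proof is correct and follows essentially the same route as the paper: both fix a countable dense set $D\subset K$, use the modulus of continuity $\omega_\xi(\delta)$ together with conditional dominated convergence to show that $(\eta(u))_{u\in D}$ is uniformly continuous off a single null set, and then extend by continuity. The only differences are cosmetic (you use rational $\delta$ where the paper uses $\delta=1/n$, and you spell out the identification $\widetilde\eta(u)=\eta(u)$ for all $u\in K$, which the paper leaves as a routine exercise).
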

\begin{proof}
  Let $D$ be a countable dense subset of $K$. We show that there is
  $\Omega'\in{\sigalg G}$ with full probability such that
  $(\eta(u))_{u\in D}$ is uniformly continuous over $D$ on $\Omega'$.
  Then we can define
  \begin{displaymath}
    \tilde{ \eta}(u)=
    \begin{cases}
      \lim\limits_{u_n\to u\atop u_n\in D} \eta(u_n)&\text{on $\Omega'$},\\
      0&\text{otherwise}.
    \end{cases}
  \end{displaymath}
  It is a routine exercise to check that $\tilde{\eta}$ is well defined and a
  continuous modification of $\eta$.

  One way to get $\Omega'$ is the following. 
  Let $\mu$ be the modulus of continuity of $\xi$, that is,
  \begin{displaymath}
    \mu(\delta)=
    \sup_{u,u'\in K, \, m(u,u') \leq \delta} \abs{\xi(u)-\xi(u')}, 
    \qquad \delta > 0. 
  \end{displaymath}
  Obviously $\mu(\delta)\to0$
  everywhere as $\delta\downarrow 0$. 
  Dominated convergence, in conjunction with the bound $\mu \leq
  2\sup_{u\in K} \abs{\xi(u)}$, yields
  $\tilde{\mu}(\delta)=\E\br{\mu(\delta)\given{\sigalg G}}\to 0$ as
  $\delta\downarrow 0$ 
  almost surely. 
  Now define
  \begin{align*}
    \Omega' = \event*{\lim_{n\uparrow\infty}
    \tilde{\mu}\left(\frac{1}{n}\right)=0} \cap 
    \zjel*{ \bigcap_{n \in \N}\,\, \bigcap_{u,u'\in D, \, m(u,u')\leq
    \sfrac{1}{n}} 
    \event*{\abs{\eta(u)-\eta(u')}\leq   \tilde{\mu}\zjel*{\frac{1}{n}}}}.
  \end{align*} Clearly $\Omega'$ has full probability and the claim is proved.
\end{proof}

In the setting of Lemma~\ref{lem:cont}
when $K\subset\R^d$ and $\xi$ is  a random element in $C^1(K)$ then
under mild conditions $\eta(u)=\E\br{\xi(u)\given\sigalg G}$ has a
version taking values in $C^1(K)$. This is the content of the next
lemma. Recall that a function $f$ defined on $K$ belongs to $C^1(K)$
if $f$ is continuous and 
there is a continuous $\R^d$--valued function on $K$ which agrees with
the gradient $f'$ of $f$ in the interior of $K$.

\begin{lemma}\label{lem:C1}
  Let  ${\sigalg G}$ be a sigma algebra
  with ${\sigalg G}  \subset {\sigalg F}$ and let $\xi$ be a random
  element in $C^1(K)$,  where $K\subset\R^d$ is a compact 
  subset set.
  Suppose that $$\E_\P\left[\sup_{u\in K} \abs{\xi(u)}\right] + \E_\P\left[\sup_{u\in K} \abs{\xi'(u)}\right]<\infty$$ and let
  $\eta(u)=\E_\P\br{\xi(u)\given{\sigalg G}}$ for all $u\in K$. Then 
  $(\eta(u))_{u\in K}$ has a version taking values in $C^1(K)$ and
  the continuous version of $(\E\br{\xi'(u)\given\sigalg G})_{u \in K}$ gives the gradient of $\eta$ almost surely.
\end{lemma}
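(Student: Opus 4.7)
The plan is as follows. First, I would apply Lemma~\ref{lem:cont} to produce continuous modifications: since $\xi$ is a random element in $C^1(K)\subset C(K)$ with $\E_\P\br{\sup_{u\in K}\abs{\xi(u)}}<\infty$, Lemma~\ref{lem:cont} yields a continuous modification $\tilde\eta$ of $\eta$. Applying the same lemma componentwise to $\xi'$, which by assumption satisfies $\E_\P\br{\sup_{u\in K}\abs{\xi'(u)}}<\infty$, yields a continuous modification $\tilde\zeta$ of $\zeta(u)=\E_\P\br{\xi'(u)\given\sigalg G}$.

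Next, I would derive the integral identity
\begin{equation*}
  \tilde\eta(v)-\tilde\eta(u)=\int_0^1 \tilde\zeta\zjel{u+t(v-u)}\cdot (v-u)\,\d t,
\end{equation*}
valid for all $u,v\in K$ with line segment $[u,v]\subset K$, off a single $\P$--null set. The starting point is the deterministic identity $\xi(v)-\xi(u)=\int_0^1 \xi'(u+t(v-u))\cdot(v-u)\,\d t$, which holds pointwise on $\Omega$ by the $C^1$ regularity of $\xi$. Taking $\E_\P\br{\cdot\given\sigalg G}$ of both sides and invoking conditional Fubini---justified because the integrand is dominated in absolute value by $\abs{v-u}\sup_{w\in K}\abs{\xi'(w)}$, which is integrable---gives the identity with $\eta$ and $\zeta$ in place of $\tilde\eta$ and $\tilde\zeta$, almost surely, for each fixed pair $(u,v)$.

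To upgrade from ``for each fixed $(u,v)$ almost surely'' to ``almost surely for all $(u,v)$'', I would take a countable dense subset $D\subset K$, pool the null sets over pairs in $D\times D$ whose segment lies in $K$, and then extend by continuity: on the full-measure event where the identity holds for all such pairs in $D\times D$, both sides are continuous in $(u,v)$ thanks to the continuity of $\tilde\eta$ and $\tilde\zeta$, so the identity extends to all admissible pairs in $K\times K$. The fundamental theorem of calculus, combined with the continuity of $\tilde\zeta$, then shows that $\tilde\eta$ is differentiable at every point of $\interior(K)$ with gradient $\tilde\zeta$; since $\tilde\zeta$ is continuous on all of $K$, we conclude $\tilde\eta\in C^1(K)$ with $\tilde\eta'=\tilde\zeta$, as required.

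The main technical point is the null-set bookkeeping: one must exhibit a single exceptional set outside of which the gradient identity holds uniformly in $u$. This is exactly the reason for working with the continuous modifications furnished by Lemma~\ref{lem:cont} rather than with arbitrary versions of $\eta$ and $\zeta$. Everything else---the deterministic FTC, the domination for conditional Fubini, and the passage from the integral identity to differentiability---is routine.
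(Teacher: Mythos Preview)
Your proposal is correct and follows essentially the same route as the paper's proof: apply Lemma~\ref{lem:cont} to obtain continuous versions of both $\eta$ and the conditional gradient, establish the line-integral identity for fixed endpoints via the deterministic fundamental theorem of calculus and conditional Fubini, pool null sets over a countable dense set of endpoint pairs, and extend by continuity. The only cosmetic differences are that the paper takes its dense set inside $\interior K$ and writes the integral as $\int_{I(a,b)}\eta'(u)\,\d u$ rather than via the $[0,1]$--parametrisation; neither affects the argument.
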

\begin{proof}By Lemma~\ref{lem:cont} both 
  $\eta(u)=\E\br{\xi(u)\given\sigalg G}$ and 
  $\eta'(u)=\E\br{\xi'(u)\given\sigalg G}$ have continuous
  versions. We prove that, apart from a null set, $\eta'$ is indeed the
  gradient of $\eta$. To this end, let $D$ be a countable dense subset of the
  interior of $K$ and denote by $I(a,b)$ a directed segment going
  from $a$ to $b$, for each $a,b \in K$.  Then, by assumption,   for $a,b\in D$,
  with $I(a,b)\subset\interior K$ we get
  \begin{displaymath}
    \eta(b)-\eta(a)=\E\br{\xi(a)-\xi(b)\given\sigalg G}=
    \E\br*{\int_{I(a,b)} \xi'(u)\d u\given\sigalg G}=
    \int_{I(a,b)}\eta'(u)\d u, \quad\text{ almost surely.}
  \end{displaymath}
  Hence, there exists an event $\Omega'\in\sigalg G$ with $\P[\Omega'] = 1$ such that
  \begin{displaymath}
    \eta(b,\omega)-\eta(a,\omega)=\int_{I(a,b)}\eta'(u,\omega)du,\quad
    \text{for all $a,b\in D$, with $I(a,b)\subset\interior K$ and
      $\omega\in\Omega'$}.
  \end{displaymath}
  By continuity this identity extends to all $a,b\in\interior K$ with
  $I(a,b)\subset \interior K$ on $\Omega'$. Using again the continuity
  of $\eta'(.,\omega)$ yields that $\eta'$ is indeed the gradient of
  $\eta$ on $\Omega'$. 
\end{proof}

\begin{lemma}\label{lem:ms}
  Let $(K,m)$ be a compact metric space and $\eta$  a random element
  in $C(K)$. Then there is a measurable minimiser of $\eta$, that is, a
  random   element $U$ in $K$ such that $\eta(U)=\min_{u\in K} \eta(u)$.
\end{lemma}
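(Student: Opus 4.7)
The plan is to construct $U$ as the single point in a nested intersection of measurably chosen compact subsets of $K$ whose diameters tend to zero. First I would fix a countable dense subset $\{u_k\}_{k\in\N}$ of $K$ and observe that, by continuity of $\eta(\cdot,\omega)$ on the compact space $K$, the random variable $\eta^{*}=\min_{u\in K}\eta(u)=\inf_{k\in\N}\eta(u_k)$ is measurable. By compactness, for each $n\in\N$ one can pick a finite cover $B_{n,1},\ldots,B_{n,k_n}$ of $K$ by closed balls of radius $1/n$ centred at points of $\{u_k\}$. The key auxiliary fact I would use throughout is that for any fixed compact $E\subset K$ the map $\omega\mapsto\min_{u\in E}\eta(u,\omega)$ is measurable, since by continuity it equals $\inf_{u\in E\cap\{u_k\}}\eta(u,\omega)$, a countable infimum of random variables.

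Next I would recursively define measurable indices $j_n\colon\Omega\to\{1,\ldots,k_n\}$. Set $j_1(\omega)=\min\{j:\min_{u\in B_{1,j}}\eta(u,\omega)=\eta^{*}(\omega)\}$, which is measurable by the auxiliary fact. For $n\geq 2$, partition $\Omega$ into the finitely many atoms of $\sigma(j_1,\ldots,j_{n-1})$; on the atom $\{j_1=i_1,\ldots,j_{n-1}=i_{n-1}\}$ the set $E:=B_{1,i_1}\cap\cdots\cap B_{n-1,i_{n-1}}$ is fixed and compact, so I define
\begin{displaymath}
j_n(\omega)=\min\left\{j:\min_{u\in B_{n,j}\cap E}\eta(u,\omega)=\eta^{*}(\omega)\right\}.
\end{displaymath}
Since the balls $\{B_{n,j}\}_j$ cover $K$ and, inductively, $E$ meets the closed set $M(\omega)$ of minimizers of $\eta(\cdot,\omega)$, this minimum exists. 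Applying the auxiliary fact on each atom separately then gives measurability of $j_n$ on all of $\Omega$.

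Set $C_n(\omega)=\bigcap_{m=1}^{n}B_{m,j_m(\omega)}$. This is a decreasing sequence of nonempty compact sets with diameter at most $2/n$, and each one meets $M(\omega)$ by construction; hence $\bigcap_n C_n(\omega)$ is a singleton $\{U(\omega)\}$, and $U(\omega)\in M(\omega)$ by the finite intersection property. Measurability of $U$ is then automatic because $d(c_n(\omega),U(\omega))\leq 1/n$, where $c_n(\omega)$ is the centre of $B_{n,j_n(\omega)}$, itself a measurable $K$-valued random element; so $U$ is the pointwise limit of measurable functions.

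The step I expect to be the main obstacle is the measurability of $j_n$, since the set $E$ over which the minimization takes place depends on $\omega$ through $j_1,\ldots,j_{n-1}$. Splitting into the finitely many atoms of $\sigma(j_1,\ldots,j_{n-1})$ reduces this to the clean, fixed-set form of the auxiliary fact and side-steps the need for any general measurable selection theorem.
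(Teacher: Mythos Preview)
Your argument is essentially the same as the paper's: both select recursively from finite covers of $K$ by small balls, pin down the minimiser through a Cauchy sequence of centres, and obtain $U$ as the limit of measurable, finitely-valued approximants. The only structural difference is that you work with nested intersections $C_n=\bigcap_{m\leq n}B_{m,j_m}$, whereas the paper chains consecutive balls via the distance constraint $m(U_n,U_{n+1})\leq 2^{-n}+2^{-(n+1)}$; both devices serve the same purpose and your version is arguably cleaner since it makes ``$U\in M(\omega)$'' immediate from the finite intersection property.

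One genuine but easily repaired slip: your justification of the auxiliary fact is wrong as written. You claim $\min_{u\in E}\eta(u)=\inf_{u\in E\cap\{u_k\}}\eta(u)$ ``by continuity'', but $\{u_k\}\cap E$ need not be dense in $E$ for an arbitrary compact $E\subset K$ (take $E$ to be a single point not in $\{u_k\}$, or a boundary sliver of an intersection of balls). The fact itself is correct; fix it either by choosing a countable dense subset of $E$ (which exists since $E$ is compact metric), or, as the paper does, by observing that $C(K)\ni f\mapsto\min_{u\in E}f(u)$ is $1$-Lipschitz for the sup norm and hence Borel. With that patch your proof goes through.
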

\begin{proof}
  To shorten the notation, for each $x \in K$ and $\delta \geq 0$, let 
  \begin{displaymath}
    B(x,\delta)=\set{u\in K}{ m(u,x)\leq \delta},\quad
    \eta(x,n)=\min\set{\eta(u)}{u\in B(x,2^{-n})}. 
  \end{displaymath}
  For each $n \in \N$ let $D_n$ be a finite $2^{-n}$-net in $K$; that
  is, $K \subset \bigcup_{x \in D_n}  B(x,2^{-n})$. 
  For
  each $n \in \N$ fix an order of the finite set $D_n$. 
  We shall use the fact  that for any closed set $F \subset K$ the minimum over
  $F$, that is, $\min_{u\in F}\eta(u)$, is a random variable.  This follows easily
since a continuous function on a metric space is Borel measurable, and
$C(K)\ni f\mapsto\inf_{u\in F} f(u)$ 
depends continuously on $f$, it is even Lipschitz continuous.    

  We construct a sequence  $(U_n)_{n \in \N}$ of random elements in $K$
  by recursion, such that 
  \begin{itemize}
  \item $\eta(U_n,n)=\min_{u\in K} \eta(u)$, and
  \item  $m(U_n,U_{n+1})\leq 2^{-n}+2^{-(n+1)}$.
  \end{itemize}
  Then $(U_n)_{n \in \N}$ has a limit $U$ which is a measurable
  minimiser of $\eta$ over $K$. To see that $U$ is indeed a minimiser,
  observe that for each $r>0$ there is an $n$ such that
  $B(U_n,2^{-n})\subset B(U,r)$, hence
  $$
  \min_{u\in K}\eta(u)\leq \min_{u\in B(U,r)}\eta(u)\leq
  \min_{u\in B(U_n,2^{-n})} \eta(u)=\eta(U_n,n)=\min_{u\in K} \eta(u).
  $$
  That is, the minimum of $\eta$ over the closed ball around $U$ with
  an arbitrary small positive radius $r$ agrees with the global
  minimum of $\eta$. 
  Letting $r\to0$ the continuity of $\eta$ yields that
  $\eta(U)=\min_{u\in K}\eta(u)$.  
  
  We now construct the sequence  $(U_n)_{n \in \N}$. For $n=1$ let $U_1$ be the first element in 
  \begin{displaymath}
    \set*{v\in D_1}{\eta(v,1)=\min_{u\in K}\eta(u)}.
  \end{displaymath}
  Since this set is not empty, $U_1$ is well defined. Moreover, $U_1$ takes
  values in the finite set $D_1=\smallset{v_1,\dots,v_k}$, and the
  levelset $\event{U_1=v_\ell}=A_\ell\setminus\cup_{i<\ell}A_i$, where
  $A_i=\event{\eta(v_i,1)=\min_{u\in K}\eta(u)}$, is obviously an
  event, as $\eta(v_i,1)$ and $\min_{u\in K}\eta(u)$ are random
  variables.  So $U_1$ is measurable, that is, a random 
  element from $K$. 

  If $U_1,\dots,U_n$ are defined for some $n \in \N$ set $U_{n+1}$ to
  be the first element in 
  \begin{displaymath}
    \set*{v\in D_{n+1}}{\eta(v,n+1)=\min_{u\in K}\eta(u),\,
      m(v,U_n)\leq  2^{-n}+2^{-(n+1)}}
  \end{displaymath}
  This set is not empty as 
  \begin{displaymath}
    B(U_n,2^{-n})\subset \bigcup_{v\in D_{n+1}\atop m(v,U_n)\leq 
      2^{-n}+2^{-(n+1)}} B(v,2^{-(n+1)}),
  \end{displaymath}
  so $U_{n+1}$ is well defined and its measurability is obtained
  similarly to that of 
  $U_1$. We conclude that
  the sequence with the above properties exists and its  limit is a
  measurable minimiser.
\end{proof}

\ACKNO{We thank Yuri Kabanov  for many helpful comments.}
\providecommand{\doi}[1]{doi:\href{https://doi.org/#1}{\nolinkurl{#1}}}

\end{document}